\def\br{\mbox{\boldmath $r$}}
\def\bH{\underline{H}}
\def\bK{\underline{K}}
\def\bM{\mbox{\boldmath $M$}}
\def\uM{\underline{M}}
\def\bN{\mbox{\boldmath $N$}}
\def\R{\mathbb{R}}
\def\bR{\mbox{\boldmath $R$}}
\def\bX{\mbox{\boldmath $X$}}
\def\bY{\mbox{\boldmath $Y$}}
\def\1{\mbox{\boldmath $1$}}
\def\0{\mbox{\boldmath $0$}}
\def\<{\langle}
\def\>{\rangle}
\newcommand{\first}{\operatorname{I}}
\newcommand{\second}{\operatorname{II}}
\newcommand{\third}{\operatorname{III}}
\theoremstyle{definition}
\newtheorem{thm}{Theorem}[section]
\newtheorem{cor}[thm]{Corollary}
\newtheorem{rmk}[thm]{Remark}
\newtheorem{ex}[thm]{Example}
\title{On the governing equations of membrane O surfaces}
\author{Yoshiki Jikumaru}
\address{Faculty of Information Networking for Innovation And Design, Toyo University, 1-7-11 Akabanedai, Kita-ku, Tokyo, 115-8650, Japan}
\email{jikumaru@toyo.jp}
\begin{document}
\maketitle

\begin{abstract}
It is known that a shell membrane in equilibrium where a constant purely normal load $q_n$ acts on the membrane, and where the principal curvature lines coincide with the principal stress lines, forms an integrable system called a membrane O surface.
This paper formulates the governing equations for membrane O surfaces of the 1st and 2nd kind, which are analogues to Guichard surfaces of the 1st and 2nd kind introduced by Calapso.
Furthermore, under this formulation, we show that membrane O surfaces are a subclass of Demoulin's $\Omega$ surfaces, and that the B\"acklund transformation for membrane O surfaces preserves membrane O surfaces of the 1st and 2nd kind, respectively.
\end{abstract}

\section{Introduction}

In shell membrane theory, the middle surface of a (shell) membrane is modeled as a smooth surface in the $3$-dimensional Euclidean space $\R^3$.
The coupled system of equations, consisting of the Gauss-Mainardi-Codazzi equation for the surface and the membrane equilibrium equation for a given load, imposes constraints on the membrane geometry by adding the requirement that the principal curvature lines coincide with the principal stress lines.
In particular, in a constant purely normal load case, remarkably, the system becomes `integrable' and called a membrane O surface~\cite{RogersSchief}.
Such a condition is important when considering shapes that maximize material and mechanical properties, for example, in membrane structures within architectural design.
A representative example is the surface of constant mean curvature (CMC), which provides a mathematical model for soap bubbles. 
Several other subclasses of membrane O surfaces are also being studied~\cite{RSSLame}, \cite{RSSEnneper}.
On the other hand, from the perspective of the field known as integrable geometry, the study of surfaces with `nice' properties reduces to the study of the corresponding `nice' governing equations (soliton equations).
For example, surfaces of constant mean curvature correspond one-to-one with solutions to the sinh-Gordon equation (in special coordinates).
In this paper, we propose candidates for such `nice' governing equations for membrane O surfaces.
These are introduced as analogies to the representations of Guichard surfaces of the 1st and 2nd kind, as formulated by Calapso \cite{Calapso}.
Their B\"acklund transformations are observed by Eisenhart \cite{Eisenhart_Omega2}.
In \S \ref{sec:Preliminaries}, we briefly review the fundamental concepts of surface theory used in this paper.
In \S \ref{sec:equilibrium}, we review the equilibrium equations for a shell membrane, and reformulate them in the context of O surfaces as in \cite{RogersSchief}.
In \S \ref{sec:1st_2nd_kind}, we introduce membrane O surfaces of the 1st and 2nd kind based on geometric constraints.
Based on these formulations, we derive the governing equations and present typical concrete examples.
Furthermore, we show that the condition for Demoulin's $\Omega$ surfaces naturally follows from the same expression.
Finally, in \S \ref{sec:Backlund}, we show that the B\"acklund transformation of a membrane O surface of the 1st and 2nd kind yields a membrane O surface of the same kind.
This suggests that the equation we present is `solitonic'.

\section{Preliminaries}
\label{sec:Preliminaries}

Let us consider a surface $\Sigma$ in $\R^3$ that is locally parameterized by a vector-valued function $\br = \br(x, y)$.
Unless otherwise specified, vectors in $\R^3$ are column vectors.
The coordinate lines $x=const.$, $y = const.$ on the surface form a \textit{conjugate net} when they satisfy the hyperbolic equation of the form
\begin{equation}
\br_{xy} = (\log A_1)_y \br_x + (\log A_2)_x \br_y.
\end{equation}
We decompose the tangent vectors $\br_x$ and $\br_y$ of the surface as follows:
\begin{equation}
\br_x = A_1 \bX, \quad
\br_y = A_2 \bY.
\end{equation}
Then, the compatibility condition $(\br_x)_y = (\br_y)_x$ yields the relation
\begin{equation}
\bX_y = q \bY, \quad
\bY_x = p \bX.
\end{equation}
Here, the coefficients $p$ and $q$ are given as follows:
\begin{equation}
\label{eq:A1A2}
(A_1)_y = p A_2, \quad
(A_2)_x = q A_1.
\end{equation}
If there exists another solution $(\overline{A}_1, \overline{A}_2)$ to the equation \eqref{eq:A1A2}, then the linear system
\begin{equation}
\overline{\br}_x = \overline{A}_1 \bX, \quad
\overline{\br}_y = \overline{A}_2 \bY,
\end{equation}
naturally satisfies the compatibility, and we have another surface $\overline{\Sigma}$.
In this case, the corresponding coordinate lines on surfaces $\Sigma$ and $\overline{\Sigma}$ are parallel.
The surface $\overline{\Sigma}$ is called a \textit{Combescure transformation} of $\Sigma$.

Furthermore, the coordinates $(x, y)$ determined by imposing the orthogonality condition $\bX \cdot \bY = 0$ on coordinate lines with respect to the standard inner product of $\R^3$ are called \textit{curvature line coordinates}.
Curvature line coordinates are uniquely determined except for stretching of the coordinate lines and are preserved by the Combescure transformation.
In the curvature line coordinates, we have
\begin{equation}
(\bX \cdot \bX)_y = 2q (\bX \cdot \bY) = 0, \quad
(\bY \cdot \bY)_x = 2p (\bX \cdot \bY) = 0,
\end{equation}
therefore a suitable coordinate change gives $\bX \cdot \bX = \bY \cdot \bY = 1$.
In this case, the first fundamental form becomes $\first = A_1^2\, dx^2 + A_2^2 \, dy^2$.
For the unit normal vector field $\bN = \bX \times \bY$, we denote the principal curvatures $\kappa_1$ and $\kappa_2$, and introduce the following notations:
\begin{equation}
H_\circ = - \kappa_1 A_1, \quad
K_\circ = - \kappa_2 A_2.
\end{equation}
These are the coefficients of the third fundamental form $\third = H_\circ^2 \, dx^2 + K_\circ^2 \, dy^2$, and the Weingarten formula gives
\begin{equation}
\bN_x = H_\circ \bX, \quad
\bN_y = K_\circ \bY.
\end{equation}
These compatibility conditions are known as the Mainardi-Codazzi equations:
\begin{equation}
\label{eq:Mainardi_Codazzi}
(H_\circ)_y = p K_\circ, \quad
(K_\circ)_x = q H_\circ.
\end{equation}
From the definitions of $p$ and $q$, we can also express them as follows:
\begin{equation}
\label{eq:Mainardi_Codazzi_2}
p = \frac{(H_\circ)_y}{K_\circ} = \frac{(A_1)_y}{A_2}, \quad
q = \frac{(K_\circ)_x}{H_\circ} = \frac{(A_2)_x}{A_1}.
\end{equation}
Note that $\bN$ provides a representation of the unit sphere $\Sigma_\circ$ and is the Combescure transformation of $\Sigma$.
Under the above notation, the Gauss-Weingarten formula for the orthonormal frame $\Phi = (\bX, \bY, \bN)$ is expressed as follows:
\begin{equation}
\Phi_x = \Phi
\begin{pmatrix}
0 & p & H_\circ \\
- p & 0 & 0 \\
- H_\circ & 0 & 0
\end{pmatrix}, \quad
\Phi_y = \Phi
\begin{pmatrix}
0 & - q & 0 \\
q & 0 & K_\circ \\
0 & - K_\circ & 0    
\end{pmatrix}.
\end{equation}
The compatibility condition $(\Phi_x)_y = (\Phi_y)_x$ is given by the Mainardi-Codazzi equations \eqref{eq:Mainardi_Codazzi} and the following Gauss equation:
\begin{equation}
\label{eq:Gauss_eqn}
p_y + q_x + H_\circ K_\circ = 0.
\end{equation}

\section{Equilibrium equations for a shell membrane and an O surface connection}
\label{sec:equilibrium}

In the shell membrane theory, the geometry of the membrane is analyzed by the differential geometry of surfaces.
For details, see, for example, \cite{Love}, \cite{Novozhilov}.
For a constant purely normal load $q_n$ acting on the membrane per unit area, assume that the principal curvature lines and principal stress lines coincide.
Then the equilibrium equations are given by
\begin{equation}
\label{eq:membrane_equilibrium}
\begin{aligned}
(T_1)_x + (\log A_1)_x (T_1 - T_2) &= 0,\\
(T_2)_y + (\log A_2)_y (T_2 - T_1) &= 0,\\
\kappa_1 T_1 + \kappa_2 T_2 + q_n &= 0.
\end{aligned}
\end{equation}
Here, $T_1$ and $T_2$ are the in-plane normal stress resultants.
Hereafter, the equations \eqref{eq:membrane_equilibrium}${}_{1,2}$ are referred to as the in-plane equilibrium equations, and the equation \eqref{eq:membrane_equilibrium}${}_3$ is referred to as the out-of-plane equilibrium equation.
Following \cite{RogersSchief}, it is convenient to introduce the following notations:
\begin{equation}
\overline{A}_1 = T_2 A_1, \quad
\overline{A}_2 = T_1 A_2.
\end{equation}
Then, the in-plane equilibrium equation is expressed as follows:
\begin{equation}
(\overline{A}_1)_y = p \overline{A}_2, \quad
(\overline{A}_2)_x = q \overline{A}_1.
\end{equation}
Therefore, the existence of the Combescure transformation $\overline{\Sigma}$ is necessary and sufficient for the in-plane equilibrium equations to hold.
Furthermore, we introduce the following symbols:
\begin{equation}
\bH = (H_\circ, A_1, \overline{A}_1), \quad
\bK = (K_\circ, A_2, \overline{A}_2).
\end{equation}
Under these symbols, the representation of the Combescure-related triplet $\Sigma_\circ$, $\Sigma$, $\overline{\Sigma}$ is regarded as the $3\times 3$ matrix $\bR = (\bN, \br, \overline{\br})$.
Then we have the relation
\begin{equation}
\bR_x = \bX \bH, \quad
\bR_y = \bY \bK.
\end{equation}
The compatibility condition $(\bR_x)_y = (\bR_y)_x$ is given as follows:
\begin{equation}
\label{eq:compatibility_R}
\bX_y = q \bY, \quad
\bY_x = p \bX, \quad
\bH_y = p \bK, \quad
\bK_x = q \bH.
\end{equation}
In the matrix representation $\bR$, we can consider that the row vectors give three `dual' surfaces in the dual space $\R^3$.
Therefore, by considering $\bH$ and $\bK$ as tangent vectors to the `dual' surface, one can impose the orthogonality condition on the vectors $\bH$ and $\bK$ as in the curvature line coordinates.
Remarkably, the out-of-plane equilibrium equation \eqref{eq:membrane_equilibrium}${}_3$ provides an `orthogonality condition' via the constant matrix $\Lambda$:
\begin{equation}
\label{eq:orthogonality}
\bH \Lambda \bK^T = 0, \quad
\Lambda = 
\begin{pmatrix}
0 & 0 & 1\\
0 & -q_n & 0\\
1 & 0 & 0
\end{pmatrix}.
\end{equation}
In this case, the Combescure-related triplet $\bR = (\bN, \br, \overline{\br})$ forms an \textit{O surface} \cite{schief_unification_2003}.
Hereafter, we call the $3\times 3$ matrix $\bR = (\bN, \br, \overline{\br})$ as a \textit{membrane O surface}.
One of the crucial points of the O surface theory is the existence of the `first integral'.
That is, since $\Lambda$ is a constant matrix, we have
\begin{equation}
\label{eq:HK_norm}
(\bH \Lambda \bH^T)_y = (\bK \Lambda \bK^T)_y = 0.
\end{equation}
where we used the relation \eqref{eq:compatibility_R} and \eqref{eq:orthogonality}.
Therefore, there exist some functions $f(x)$ and $g(y)$ such that
\begin{equation}
\bH \Lambda \bH^T = -f(x), \quad
\bK \Lambda \bK^T = -g(y).
\end{equation}
More specifically, these are given by the following equations:
\begin{equation}
\label{eq:first_integral}
2\overline{A}_1 H_\circ - q_n A_1^2 = - f(x), \quad
2\overline{A}_2 K_\circ - q_n A_2^2 = - g(y),
\end{equation}
Substituting these into the out-of-plane equilibrium equation \eqref{eq:membrane_equilibrium}${}_3$ yields the constraint:
\begin{equation}
\label{eq:constraint}
f(x)K_\circ^2 + g(y) H_\circ^2 = q_n (H_\circ A_2 - K_\circ A_1)^2.
\end{equation}
Conversely, if we have some functions $f(x)$ and $g(y)$ satisfying the constraint \eqref{eq:constraint}, by defining the quantities $\overline{A}_1$ and $\overline{A}_2$ using the equation \eqref{eq:first_integral}, there exists a Combescure transformation $\overline{\Sigma}$ and the triplet $(\bN, \br, \overline{\br})$ satisfy the orthogonality condition \eqref{eq:orthogonality}.
Therefore, we have the following theorem:
\begin{thm}[\cite{RogersSchief}, Theorem 4.2]
Assume that a constant purely normal load $q_n$ acts on a membrane and the principal curvature lines coincide with the principal stress lines. 
Then the membrane is in equilibrium if and only if there exist functions $f(x)$ and $g(y)$ satisfying the constraint \eqref{eq:constraint} under the Gauss-Mainardi-Codazzi equation \eqref{eq:Mainardi_Codazzi}, \eqref{eq:Gauss_eqn}.
In this case, the stress resultants $T_1$, $T_2$ are determined by the equation \eqref{eq:first_integral}.
\end{thm}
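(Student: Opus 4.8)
The plan is to prove the two implications separately, in each case using the translation already set up in this section between the membrane equilibrium equations and the O surface data $\bH = (H_\circ, A_1, \overline{A}_1)$, $\bK = (K_\circ, A_2, \overline{A}_2)$.

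First I would treat the forward direction. If the membrane is in equilibrium, then the in-plane equations \eqref{eq:membrane_equilibrium}${}_{1,2}$, after the substitution $\overline{A}_1 = T_2 A_1$, $\overline{A}_2 = T_1 A_2$, are exactly the Combescure system $(\overline{A}_1)_y = p\overline{A}_2$, $(\overline{A}_2)_x = q\overline{A}_1$, so the Combescure transform $\overline{\Sigma}$ exists and $\bR = (\bN, \br, \overline{\br})$ satisfies \eqref{eq:compatibility_R}; and the out-of-plane equation \eqref{eq:membrane_equilibrium}${}_3$, rewritten via $\kappa_1 = -H_\circ/A_1$, $\kappa_2 = -K_\circ/A_2$, is the orthogonality relation $\bH\Lambda\bK^T = 0$ of \eqref{eq:orthogonality}. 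Then, since $\Lambda = \Lambda^T$ is constant and $\bH_y = p\bK$, $\bK_x = q\bH$ by \eqref{eq:compatibility_R}, one gets $(\bH\Lambda\bH^T)_y = 2p\,\bK\Lambda\bH^T = 2p\,\bH\Lambda\bK^T = 0$ and symmetrically $(\bK\Lambda\bK^T)_x = 0$, so $\bH\Lambda\bH^T = -f(x)$ and $\bK\Lambda\bK^T = -g(y)$ for some $f, g$; these are the first integrals \eqref{eq:first_integral}. Solving \eqref{eq:first_integral} for $\overline{A}_1, \overline{A}_2$ and substituting into $\bH\Lambda\bK^T = 0$ eliminates the Combescure data and leaves precisely the constraint \eqref{eq:constraint}.

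For the converse, suppose $f(x), g(y)$ satisfy \eqref{eq:constraint}, and define $\overline{A}_1, \overline{A}_2$ by \eqref{eq:first_integral} --- which is, tautologically, the statement $\bH\Lambda\bH^T = -f(x)$, $\bK\Lambda\bK^T = -g(y)$. A direct elimination of $\overline{A}_1, \overline{A}_2$ shows that the conjunction of \eqref{eq:constraint} and \eqref{eq:first_integral} is equivalent to $\bH\Lambda\bK^T = 0$, i.e. to the out-of-plane equation. It remains to produce the in-plane equations, that is, the Combescure system for $(\overline{A}_1, \overline{A}_2)$. For this I would differentiate $\bH\Lambda\bH^T = -f(x)$ in $y$: the Mainardi-Codazzi equations \eqref{eq:Mainardi_Codazzi} and the definition of $p$ give $(H_\circ)_y = pK_\circ$, $(A_1)_y = pA_2$, so $\bH_y - p\bK = (0, 0, (\overline{A}_1)_y - p\overline{A}_2)$, and since $\Lambda\bH^T$ has last entry $H_\circ$ one obtains $0 = \tfrac12(\bH\Lambda\bH^T)_y = p\,\bK\Lambda\bH^T + ((\overline{A}_1)_y - p\overline{A}_2)H_\circ$; as $\bK\Lambda\bH^T = \bH\Lambda\bK^T = 0$ by the previous step, this forces $(\overline{A}_1)_y = p\overline{A}_2$ wherever $H_\circ \neq 0$, and symmetrically $(\overline{A}_2)_x = q\overline{A}_1$ wherever $K_\circ \neq 0$. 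Hence $\overline{\Sigma}$ exists, all of \eqref{eq:membrane_equilibrium} holds with $T_2 = \overline{A}_1/A_1$, $T_1 = \overline{A}_2/A_2$, and these are the stress resultants given by \eqref{eq:first_integral}.

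Most of this is symbol-pushing once the dictionary ``in-plane equilibrium $\leftrightarrow$ Combescure'' and ``out-of-plane equilibrium $\leftrightarrow$ orthogonality'' is in hand, so the only genuine point is the reverse direction: one must check that the algebraically prescribed $\overline{A}_1, \overline{A}_2$ actually solve the full first-order Combescure system rather than merely being compatible with it, and that this is exactly what the constraint \eqref{eq:constraint} --- and not some weaker relation --- guarantees. The remaining care is bookkeeping: the divisions by $H_\circ$, $K_\circ$ (handled on the open dense set where these are nonzero, the rest by continuity in \eqref{eq:first_integral} read as a polynomial identity) and the observation that $f$ and $g$ really are functions of one variable.
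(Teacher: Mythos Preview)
Your proof is correct and follows the same route as the paper: the forward direction is exactly the discussion preceding the theorem (in-plane equilibrium $\leftrightarrow$ Combescure system, out-of-plane $\leftrightarrow$ orthogonality $\bH\Lambda\bK^T=0$, then the first integrals \eqref{eq:first_integral} and their substitution into the orthogonality to obtain \eqref{eq:constraint}). For the converse the paper simply asserts that defining $\overline{A}_1,\overline{A}_2$ via \eqref{eq:first_integral} produces a Combescure transform satisfying \eqref{eq:orthogonality}; your argument differentiating $\bH\Lambda\bH^T=-f(x)$ in $y$ and using Mainardi--Codazzi to isolate $((\overline{A}_1)_y-p\overline{A}_2)H_\circ$ is a clean way to supply the detail the paper omits.
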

\begin{rmk}
A class of surfaces called \textit{Guichard surfaces}, which provide a generalization of isothermic surfaces, is defined as follows.
For a surface $\Sigma$ and its Combescure transformation $\overline{\Sigma}$, we denote the principal curvatures $\kappa_1$, $\kappa_2$, $\overline{\kappa}_1$, $\overline{\kappa}_2$, respectively.
Then, if there exists a constant $c \neq 0$ satisfying
\begin{equation}
\frac{1}{\kappa_1\overline{\kappa}_2} + \frac{1}{\kappa_2\overline{\kappa}_1} = c,
\end{equation}
then, the surface $\Sigma$ is called a Guichard surface and $\overline{\Sigma}$ is called its associate.
In the context of O surfaces, the constraint can be characterized as the existence of functions $f(x)$ and $g(y)$ satisfying
\begin{equation}
f(x)A_2^2 + g(y) A_1^2 = c (H_\circ A_2 - K_\circ A_1)^2.
\end{equation}
Therefore, formally, the object obtained by swapping the first and third fundamental forms of a Guichard surface is a membrane O surface.
\end{rmk}

\section{Membrane O surfaces of the 1st and 2nd kind}
\label{sec:1st_2nd_kind}

In the following, we consider the case where $q_n \neq 0$.
Calapso, in his paper \cite{Calapso}, presented Guichard surfaces of the 1st and 2nd kind, and studied their governing equations (see also \cite{eisenhart_transformation}, \S 92).
In this section, we consider analogies in these membrane O surfaces.
In the integration of equation \eqref{eq:HK_norm}, by considering the change of variables preserving the curvature line coordinates, the first integrals $f(x)$ and $g(y)$ are normalized as follows:
(i) $f(x) = - g(y) = q_n$, (ii) $f(x) = g(y) = q_n$.
By virtue of the study by Calapso, we refer to the former as a \textit{membrane O surface of the 1st kind}, and the latter as the \textit{2nd kind}.
Depending on the normalization, we show the following theorem:
\begin{thm}
\label{thm:main1_1st}
For a membrane O surface of the 1st kind, that is, the coordinates $(x, y)$ where normalization
\begin{equation}
f(x) = - g(y) = q_n,
\end{equation}
is admissible, the first and third fundamental forms are represented as follows:
\begin{equation}
\begin{aligned}
\first &= (\cosh \alpha + h \sinh \alpha)^2 \, dx^2 + (\sinh \alpha + h \cosh \alpha)^2 \, dy^2,\\
\third &= e^{2\xi} (\sinh^2 \alpha \, dx^2 + \cosh^2 \alpha \, dy^2),
\end{aligned}
\end{equation}
In this case, $h$, $\alpha$, and $\xi$ satisfy the following system:
\begin{equation}
\label{eq:1st_govern}
\begin{aligned}
&h_x = (h + \coth \alpha) \xi_x, \quad 
h_y = (h + \tanh \alpha) \xi_y,\\
&\xi_{xy} = \xi_x \xi_y + (\log \sinh \alpha)_y \xi_x + (\log \cosh \alpha)_x \xi_y,\\
&(\alpha_x + \xi_x \coth \alpha)_x 
+ (\alpha_y + \xi_y \tanh \alpha)_y
+ e^{2\xi} \sinh \alpha \cosh \alpha = 0.
\end{aligned}
\end{equation}
Furthermore, using $h$, $\alpha$, and $\xi$, the stress resultants $T_1$ and $T_2$ are given by
\begin{equation}
\label{eq:1st_stress}
\begin{aligned}
T_1 
&= \frac{q_ne^{-\xi}}{2} \frac{2h \sinh \alpha + (1+h^2)\cosh \alpha}{\sinh \alpha + h \cosh \alpha},\\
T_2
&= \frac{q_ne^{-\xi}}{2} \frac{2h \cosh \alpha + (1+h^2)\sinh \alpha}{\cosh \alpha + h \sinh \alpha}.
\end{aligned}
\end{equation}
\end{thm}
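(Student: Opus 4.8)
The plan is to push the structure equations of \S\ref{sec:Preliminaries}--\S\ref{sec:equilibrium} through a change of variables dictated by the normalization $f(x)=-g(y)=q_n$. Substituting this into \eqref{eq:constraint} and dividing by $q_n\neq0$ gives
\begin{equation}
K_\circ^2-H_\circ^2=(H_\circ A_2-K_\circ A_1)^2 .
\end{equation}
Since the right-hand side is a perfect square, on any domain where $K_\circ^2>H_\circ^2$ it is natural to set $H_\circ=e^{\xi}\sinh\alpha$ and $K_\circ=e^{\xi}\cosh\alpha$; this already yields the asserted form of $\third$ and turns the displayed constraint into $\sinh\alpha\,A_2-\cosh\alpha\,A_1=-1$ once the orientation of the Gauss map is fixed. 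The latter is an affine relation between $A_1$ and $A_2$ whose general local solution is $A_1=\cosh\alpha+h\sinh\alpha$, $A_2=\sinh\alpha+h\cosh\alpha$ with a single free function $h$, giving the asserted form of $\first$. I expect this identification to be the only genuinely nontrivial step: once the substitution is found, everything downstream is mechanical, and the remaining care is purely with signs and domains of validity (the region $K_\circ^2>H_\circ^2$, the orientation of $\bN$, and local solvability of the affine relation where $\sinh\alpha\neq0$).

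Next I would substitute this parametrization into the Mainardi--Codazzi and Gauss equations. By \eqref{eq:Mainardi_Codazzi_2} the functions $p$, $q$ have the two presentations $p=(A_1)_y/A_2=(H_\circ)_y/K_\circ$ and $q=(A_2)_x/A_1=(K_\circ)_x/H_\circ$. Computing both presentations from the parametrization gives $p=\alpha_y+\xi_y\tanh\alpha=\alpha_y+h_y\sinh\alpha/A_2$ and $q=\alpha_x+\xi_x\coth\alpha=\alpha_x+h_x\cosh\alpha/A_1$; equating and simplifying with $A_1=\cosh\alpha+h\sinh\alpha$, $A_2=\sinh\alpha+h\cosh\alpha$ produces exactly the first line of \eqref{eq:1st_govern}, namely $h_x=(h+\coth\alpha)\xi_x$ and $h_y=(h+\tanh\alpha)\xi_y$. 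Imposing $h_{xy}=h_{yx}$ on this overdetermined pair, using $(\coth\alpha)_y=-\alpha_y/\sinh^2\alpha$, $(\tanh\alpha)_x=\alpha_x/\cosh^2\alpha$, $\coth\alpha-\tanh\alpha=1/(\sinh\alpha\cosh\alpha)$ and the identity $h_y\xi_x-h_x\xi_y=(\tanh\alpha-\coth\alpha)\xi_x\xi_y$ coming from the first line, and then dividing through by $\coth\alpha-\tanh\alpha$, yields the second equation of \eqref{eq:1st_govern}. Finally, since $p=\alpha_y+\xi_y\tanh\alpha$, $q=\alpha_x+\xi_x\coth\alpha$ and $H_\circ K_\circ=e^{2\xi}\sinh\alpha\cosh\alpha$, the Gauss equation \eqref{eq:Gauss_eqn}, $p_y+q_x+H_\circ K_\circ=0$, is literally the third equation of \eqref{eq:1st_govern}. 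Thus each line of the governing system corresponds to one structure equation: Mainardi--Codazzi for the pair $(h_x,h_y)$, integrability of $h$ for the $\xi_{xy}$ equation, and Gauss for the $\alpha$ equation.

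For the stress resultants I would use the first integrals \eqref{eq:first_integral}: with $f(x)=q_n$, $g(y)=-q_n$ they give $\overline{A}_1=q_n(A_1^2-1)/(2H_\circ)$ and $\overline{A}_2=q_n(A_2^2+1)/(2K_\circ)$, so that $T_2=\overline{A}_1/A_1$ and $T_1=\overline{A}_2/A_2$. Inserting the parametrization and using $A_1^2-1=\sinh\alpha\,[(1+h^2)\sinh\alpha+2h\cosh\alpha]$ and $A_2^2+1=\cosh\alpha\,[(1+h^2)\cosh\alpha+2h\sinh\alpha]$, the extra factors $\sinh\alpha$ and $\cosh\alpha$ cancel against those in $H_\circ=e^{\xi}\sinh\alpha$ and $K_\circ=e^{\xi}\cosh\alpha$, leaving precisely \eqref{eq:1st_stress}. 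All that remains is routine hyperbolic bookkeeping, so the substance of the proof is concentrated entirely in choosing the parametrization of Step~1.
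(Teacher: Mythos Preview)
Your proposal is correct and follows essentially the same route as the paper: introduce the parametrization $H_\circ=e^{\xi}\sinh\alpha$, $K_\circ=e^{\xi}\cosh\alpha$, $A_1=\cosh\alpha+h\sinh\alpha$, $A_2=\sinh\alpha+h\cosh\alpha$, read off \eqref{eq:1st_govern}$_1$ from the two presentations of $p,q$ in \eqref{eq:Mainardi_Codazzi_2}, obtain \eqref{eq:1st_govern}$_2$ from $h_{xy}=h_{yx}$, obtain \eqref{eq:1st_govern}$_3$ from the Gauss equation, and derive \eqref{eq:1st_stress} from the first integrals \eqref{eq:first_integral}. The one place you go slightly beyond the paper is in \emph{motivating} the ansatz for $A_1,A_2$ by first reducing the constraint to the affine relation $\sinh\alpha\,A_2-\cosh\alpha\,A_1=-1$ and then writing its general solution; the paper simply posits the parametrization and verifies it, so your version is a welcome clarification rather than a different argument.
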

\begin{thm}
\label{thm:main1_2nd}
For a membrane O surface of the 2nd kind, that is, the coordinates $(x, y)$ where normalization
\begin{equation}
f(x) = g(y) = q_n,
\end{equation}
is admissible, the first and third fundamental forms are represented as follows:
\begin{equation}
\begin{aligned}
\first &= (\cos \alpha + h \sin \alpha)^2 \, dx^2 + (\sin \alpha - h \cos \alpha)^2 \, dy^2,\\
\third &= e^{2\xi} (\sin^2 \alpha \, dx^2 + \cos^2 \alpha \, dy^2),
\end{aligned}
\end{equation}
In this case, $h$, $\alpha$, and $\xi$ satisfy the following system:
\begin{equation}
\label{eq:2nd_govern}
\begin{aligned}
&h_x = (h + \cot \alpha) \xi_x, \quad
h_y = (h - \tan \alpha) \xi_y,\\
&\xi_{xy} = \xi_x \xi_y + (\log \sin \alpha)_y \xi_x + (\log \cos \alpha)_x \xi_y,\\
&(-\alpha_x + \xi_x \cot \alpha)_x + (\alpha_y + \xi_y \tan \alpha)_y + e^{2\xi} \sin \alpha \cos \alpha = 0.
\end{aligned}
\end{equation}
Furthermore, using $h$, $\alpha$, and $\xi$, the stresses resultants $T_1$ and $T_2$ are given by
\begin{equation}
\label{eq:2nd_stress}
\begin{aligned}
T_1 
&= \frac{q_ne^{-\xi}}{2} \frac{2h \sin \alpha + (1-h^2)\cos \alpha}{\sin \alpha - h \cos \alpha},\\
T_2
&= \frac{q_ne^{-\xi}}{2} \frac{2h \cos \alpha - (1-h^2)\sin \alpha}{\cos \alpha + h \sin \alpha}.
\end{aligned}
\end{equation}
\end{thm}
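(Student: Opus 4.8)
The plan is to prove Theorem \ref{thm:main1_2nd} by the same reduction one would use for Theorem \ref{thm:main1_1st}, with the hyperbolic functions replaced by trigonometric ones, working on the (dense, open) set where the third fundamental form is non-degenerate, i.e. $H_\circ K_\circ\neq 0$. For a membrane O surface of the 2nd kind one has $f(x)=g(y)=q_n$, so dividing the constraint \eqref{eq:constraint} by $q_n\neq 0$ leaves the single scalar relation $H_\circ^2+K_\circ^2=(H_\circ A_2-K_\circ A_1)^2$. I would first set $\xi=\tfrac12\log(H_\circ^2+K_\circ^2)$ and choose locally a smooth function $\alpha$ with $(\sin\alpha,\cos\alpha)$ positively proportional to $(H_\circ,K_\circ)$, so that $H_\circ=e^\xi\sin\alpha$, $K_\circ=e^\xi\cos\alpha$ and hence $\third=e^{2\xi}(\sin^2\alpha\,dx^2+\cos^2\alpha\,dy^2)$. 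The constraint then reads $H_\circ A_2-K_\circ A_1=\pm e^\xi$ with a locally constant sign which, after possibly reversing the orientation of $\bN$, I may take to be such that this becomes an affine-linear equation for $(A_1,A_2)$ at each point; its general solution, with $h$ the affine parameter, is $A_1=\cos\alpha+h\sin\alpha$, $A_2=-\sin\alpha+h\cos\alpha$ for a smooth function $h$, which yields $\first=(\cos\alpha+h\sin\alpha)^2\,dx^2+(\sin\alpha-h\cos\alpha)^2\,dy^2$, the claimed form. Thus the representation of the fundamental forms is not guessed but forced by the constraint.

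Next I would feed this parametrisation into the Gauss--Mainardi--Codazzi equations \eqref{eq:Mainardi_Codazzi}, \eqref{eq:Gauss_eqn}. Using $(H_\circ)_y=\xi_yH_\circ+\alpha_yK_\circ$, $(A_1)_y=h_y\sin\alpha+\alpha_yA_2$ and the identity $p=(A_1)_y/A_2=(H_\circ)_y/K_\circ$ (which is half of Mainardi--Codazzi), the equation $(H_\circ)_y=pK_\circ$ collapses after cancellation to $\xi_yA_2=h_y\cos\alpha$, i.e. $h_y=(h-\tan\alpha)\xi_y$; symmetrically $(K_\circ)_x=qH_\circ$ gives $h_x=(h+\cot\alpha)\xi_x$, so the first two equations of \eqref{eq:2nd_govern} are exactly Mainardi--Codazzi in the new variables. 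Reinserting these shows $p=\alpha_y+\xi_y\tan\alpha$ and $q=-\alpha_x+\xi_x\cot\alpha$, whence the Gauss equation $p_y+q_x+H_\circ K_\circ=0$ (with $H_\circ K_\circ=e^{2\xi}\sin\alpha\cos\alpha$) becomes precisely the last equation of \eqref{eq:2nd_govern}. The middle $\xi_{xy}$-equation is then the integrability condition $(h_x)_y=(h_y)_x$ of the first-order $h$-system: differentiating and using $\cot\alpha+\tan\alpha=1/(\sin\alpha\cos\alpha)$ together with $h_x\xi_y-h_y\xi_x=(\cot\alpha+\tan\alpha)\xi_x\xi_y$, everything reduces to $\xi_{xy}=\xi_x\xi_y+(\log\sin\alpha)_y\xi_x+(\log\cos\alpha)_x\xi_y$.

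For the stress resultants I would use the first integrals \eqref{eq:first_integral} with $f=g=q_n$ to solve $\overline{A}_1=q_n(A_1^2-1)/(2H_\circ)$, $\overline{A}_2=q_n(A_2^2-1)/(2K_\circ)$, and then read off $T_2=\overline{A}_1/A_1$, $T_1=\overline{A}_2/A_2$ from $\overline{A}_1=T_2A_1$, $\overline{A}_2=T_1A_2$. Substituting the parametrisation and the factorisations $A_1^2-1=\sin\alpha\bigl((h^2-1)\sin\alpha+2h\cos\alpha\bigr)$ and $A_2^2-1=\cos\alpha\bigl((h^2-1)\cos\alpha-2h\sin\alpha\bigr)$, the common factors $\sin\alpha$ (resp. $\cos\alpha$) cancel against the corresponding factor of $H_\circ$ (resp. $K_\circ$), leaving exactly \eqref{eq:2nd_stress}. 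The converse direction is the same computation read in reverse: the parametrised $A_i,H_\circ,K_\circ$ solve the constraint by construction, the first two equations of \eqref{eq:2nd_govern} are equivalent to Mainardi--Codazzi and are compatible thanks to the $\xi_{xy}$-equation, and the last equation is the Gauss equation, so every solution of \eqref{eq:2nd_govern} integrates to a membrane O surface of the 2nd kind with the stated stresses.

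The step I expect to be genuinely delicate is not any individual computation but the bookkeeping behind the normalisation in the first step: one has to verify that non-degeneracy of $\third$ lets $\xi$ and a branch of $\alpha$ be chosen smoothly, that the sign in $H_\circ A_2-K_\circ A_1=\pm e^\xi$ is locally constant so that a single orientation reversal fixes it, and that the signs of $A_1,A_2,H_\circ,K_\circ$ dictated by $\first$ and $\third$ are mutually consistent with the Mainardi--Codazzi equations. This sign structure is exactly where the 2nd-kind case departs from the 1st-kind case (in which the naive sign choices already close up); once it is pinned down, the remainder is the routine substitution outlined above.
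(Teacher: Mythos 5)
Your proposal is correct and follows essentially the same route as the paper: normalize the constraint to $H_\circ^2+K_\circ^2=(H_\circ A_2-K_\circ A_1)^2$, introduce the trigonometric parametrization in $(h,\alpha,\xi)$, obtain \eqref{eq:2nd_govern}$_1$ from Mainardi--Codazzi, \eqref{eq:2nd_govern}$_2$ from the compatibility $(h_x)_y=(h_y)_x$, \eqref{eq:2nd_govern}$_3$ from the Gauss equation, and the stresses from the first integrals $2T_2A_1H_\circ=q_n(A_1^2-1)$, $2T_1A_2K_\circ=q_n(A_2^2-1)$. The only (immaterial) difference is a sign convention --- the paper takes $K_\circ=-e^\xi\cos\alpha$, $A_2=\sin\alpha-h\cos\alpha$ where you take $K_\circ=e^\xi\cos\alpha$, $A_2=-\sin\alpha+h\cos\alpha$, amounting to a flip of $\bY$ --- which changes none of the stated formulas, and your extra care in showing the parametrization is forced by the constraint is a harmless refinement.
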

\begin{rmk}
The second fundamental form is given by the following equations: 
For the 1st kind:
\begin{equation}
\second = -e^\xi (\sinh \alpha (\cosh \alpha + h \sinh \alpha) \, dx^2 + \cosh \alpha (\sinh \alpha + h \cosh \alpha)\, dy^2).
\end{equation}
For the 2nd kind:
\begin{equation}
\second = -e^\xi (\sin \alpha (\cos \alpha + h \sin \alpha) \, dx^2 + \cos \alpha (h \cos \alpha- \sin \alpha)\, dy^2).
\end{equation}
\end{rmk}
\begin{rmk}
Similar governing equations are obtained for both Guichard surfaces of the 1st and 2nd kind, but the equations corresponding to equations \eqref{eq:1st_govern}$_{2,3}$ and \eqref{eq:2nd_govern}$_{2,3}$ are of a form in which all three functions $\alpha$, $\xi$, and $h$ appear (see \cite{Calapso}, p.212, p.217, and \cite{eisenhart_transformation}, \S 92).
However, for membrane O surfaces, they depend only on $\alpha$ and $\xi$.
Therefore, in principle, solve the equation \eqref{eq:1st_govern}${}_{2,3}$ or the equation \eqref{eq:2nd_govern}${}_{2,3}$ to find $(\alpha, \xi)$ and then solve the equation \eqref{eq:1st_govern}${}_1$ or the equation \eqref{eq:2nd_govern}${}_1$ to obtain $h$.
\end{rmk}
\begin{ex}
\label{ex:cmc}
In the 1st kind case, we take $\xi = 0$ as a trivial solution of equation \eqref{eq:1st_govern}${}_2$.
Then, from the equation \eqref{eq:1st_govern}${}_1$, $h$ becomes a constant, and the equation \eqref{eq:1st_govern}${}_3$ becomes the (elliptic) sinh-Gordon equation:
\begin{equation}
\alpha_{xx} + \alpha_{yy} + \sinh \alpha \cosh \alpha = 0.
\end{equation}
By taking $h = 1$, the membrane stresses become isotropic and homogeneous:
\begin{equation}
T_1 = T_2 = q_n.
\end{equation}
Furthermore, the first and second fundamental forms are given by
\begin{equation}
\first = e^{2\alpha} (dx^2 + dy^2), \quad
\second = - e^\alpha (\sinh \alpha \, dx^2 + \cosh \alpha \, dy^2).
\end{equation}
Therefore, it corresponds to a surface of constant mean curvature $-1/2$ represented in conformal curvature line coordinates.
For future reference, we explicitly give several quantities:
\begin{equation}
\begin{aligned}
&A_1 = A_2 = e^\alpha, \quad
p = \alpha_y, \quad
q = \alpha_x,\\
&\overline{A}_1 = \overline{A}_2 = q_n e^\alpha, \quad
H_\circ = \sinh \alpha, \quad
K_\circ = \cosh \alpha.
\end{aligned}
\end{equation}
\end{ex}
\begin{ex}
In the 2nd kind case, if we take a trivial solution $\xi = 0$ of the equation \eqref{eq:2nd_govern}${}_2$, from the equation \eqref{eq:2nd_govern}${}_1$, $h$ is constant, and the equation \eqref{eq:2nd_govern}${}_3$ becomes the sine-Gordon equation:
\begin{equation}
-\alpha_{xx} + \alpha_{yy} + \sin \alpha \cos \alpha = 0.
\end{equation}
By taking $h = 0$, the membrane stresses are expressed by
\begin{equation}
T_1 = \frac{q_n}{2} \cot \alpha, \quad
T_2 = - \frac{q_n}{2} \tan \alpha.
\end{equation}
Furthermore, the first and second fundamental forms are given by
\begin{equation}
\first = \cos^2 \alpha \, dx^2 + \sin^2 \alpha \, dy^2, \quad
\second = \sin \alpha \cos \alpha (-dx^2 + dy^2).
\end{equation}
Therefore, in this case, it corresponds to a pseudospherical surface (surface of constant Gauss curvature $-1$) in the curvature line coordinates.
\end{ex}
\begin{ex}
In the 2nd kind case, if we take $\alpha = \pi/4$, then the equations \eqref{eq:2nd_govern}${}_{2,3}$ are rewritten as follows:
\begin{equation}
(e^{-\xi})_{xy} = 0, \quad
\xi_{xx} + \xi_{yy} + \frac{1}{2} e^{2\xi} = 0.
\end{equation}
Therefore, it corresponds to a solution of the Liouville equation with separation of variables, which is a class of membrane O surfaces with planar curvature lines \cite{RSSEnneper}.
\end{ex}
One of the advantages of these representations is that they are consistent with subclasses of Demoulin's $\Omega$ surfaces.
Here, a surface is called an \textit{$\Omega$ surface} if there exist functions $U = U(x)$ and $V = V(y)$ satisfying the following equation (see, for example, \cite{eisenhart_transformation}, \cite{Mason} for details):
\begin{equation}
\left( \frac{(\kappa_1)_x}{\kappa_1-\kappa_2} \frac{UA_1}{VA_2} \right)_y
+ \varepsilon^2 \left( \frac{(\kappa_2)_y}{\kappa_1-\kappa_2}\frac{VA_2}{UA_1} \right)_x = 0, \quad \varepsilon \in \{1, i\}.
\end{equation}
A direct calculation shows the following result:
\begin{cor}
A membrane O surface of the 1st kind under the aforementioned representation satisfies
\begin{equation}
\frac{(\kappa_1)_x}{\kappa_1-\kappa_2} \frac{A_1}{A_2} = -\alpha_x, \quad
\frac{(\kappa_2)_y}{\kappa_1-\kappa_2}\frac{A_2}{A_1} = \alpha_y.
\end{equation}
In particular, it satisfies the conditions for the $\Omega$ surface when $U = V = 1$ and $\varepsilon = 1$.
Similarly, for the 2nd kind case, we have
\begin{equation}
\frac{(\kappa_1)_x}{\kappa_1-\kappa_2} \frac{A_1}{A_2} = -\alpha_x, \quad
\frac{(\kappa_2)_y}{\kappa_1-\kappa_2}\frac{A_2}{A_1} = -\alpha_y.
\end{equation}
In particular, it satisfies the conditions for the $\Omega$ surface when $U = V = 1$ and $\varepsilon = i$.
\end{cor}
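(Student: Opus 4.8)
The plan is to verify the two displayed pairs of identities by a direct computation from the explicit first, second and third fundamental forms of Theorems~\ref{thm:main1_1st} and~\ref{thm:main1_2nd} and the subsequent Remark on $\second$, and then to observe that, once these are in hand, the $\Omega$ surface condition with $U\equiv V\equiv 1$ reduces to nothing more than the symmetry $\a_{xy}=\a_{yx}$ of mixed partials. The 1st and 2nd kinds are handled in exactly the same way, with $(\cosh,\sinh)$ replaced by $(\cos,\sin)$.

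For the 1st kind I would first read off $A_1=\cosh\a+h\sinh\a$, $A_2=\sinh\a+h\cosh\a$, and, from $H_\circ=-\kappa_1A_1$, $K_\circ=-\kappa_2A_2$ together with the sign of $\second$ in the Remark (equivalently the data listed in Example~\ref{ex:cmc}), $H_\circ=e^{\xi}\sinh\a$, $K_\circ=e^{\xi}\cosh\a$; hence $\kappa_1=-e^{\xi}\sinh\a/A_1$ and $\kappa_2=-e^{\xi}\cosh\a/A_2$. The elementary identity $\cosh\a\,A_1-\sinh\a\,A_2=1$ then gives $\kappa_1-\kappa_2=e^{\xi}/(A_1A_2)$ immediately; in the 2nd kind one uses its counterpart $\cos\a\,A_1-\sin\a\,A_2=1$, after choosing the square root $A_2$ in $\first$ so that it agrees with the sign convention of $\second$ in the Remark.

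The only step that does real work is the differentiation of $\kappa_1$ and $\kappa_2$, and for this only the first pair of governing equations is needed. The relations $h_x=(h+\coth\a)\xi_x$, $h_y=(h+\tanh\a)\xi_y$ of \eqref{eq:1st_govern}${}_1$ are precisely what makes the $h$- and $\a$-derivatives recombine into the clean expressions $(A_1)_x=\a_xA_2+\xi_xA_1$ and $(A_2)_y=\a_yA_1+\xi_yA_2$. Substituting these into the quotient rule for $\kappa_1=-e^{\xi}\sinh\a/A_1$, the $\xi_x$-terms cancel and the same identity $\cosh\a\,A_1-\sinh\a\,A_2=1$ collapses the numerator to $e^{\xi}\a_x$, so $(\kappa_1)_x=-e^{\xi}\a_x/A_1^2$; an analogous computation gives $(\kappa_2)_y=e^{\xi}\a_y/A_2^2$. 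Dividing by $\kappa_1-\kappa_2=e^{\xi}/(A_1A_2)$ and multiplying by $A_1/A_2$, resp.\ $A_2/A_1$, then yields precisely $\frac{(\kappa_1)_x}{\kappa_1-\kappa_2}\frac{A_1}{A_2}=-\a_x$ and $\frac{(\kappa_2)_y}{\kappa_1-\kappa_2}\frac{A_2}{A_1}=\a_y$. The 2nd kind runs identically, except that the trigonometric form of $A_2$ makes the $y$-identity come out with the opposite sign, $-\a_y$, as stated.

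Finally I would substitute into the defining equation of $\Omega$ surfaces. With $U=V\equiv 1$ it reads $\left(\frac{(\kappa_1)_x}{\kappa_1-\kappa_2}\frac{A_1}{A_2}\right)_y+\varepsilon^2\left(\frac{(\kappa_2)_y}{\kappa_1-\kappa_2}\frac{A_2}{A_1}\right)_x=0$, i.e.\ $(\varepsilon^2-1)\a_{xy}=0$ for the 1st kind and $-(\varepsilon^2+1)\a_{xy}=0$ for the 2nd kind; taking $\varepsilon=1$ in the first case and $\varepsilon=i$ in the second makes each of these an identity, which is the assertion. I do not expect any substantive obstacle: the argument is essentially bookkeeping, and the only two points that genuinely need care are (i) fixing the relative orientation so that $H_\circ$ and $K_\circ$ carry the correct relative sign, since otherwise $\kappa_1-\kappa_2$ emerges with the wrong sign, and (ii) invoking the identity $\cosh\a\,A_1-\sinh\a\,A_2=1$ (and its trigonometric counterpart) at exactly the moments where the quotient-rule numerators collapse. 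Notably, the higher governing equations \eqref{eq:1st_govern}${}_{2,3}$ and \eqref{eq:2nd_govern}${}_{2,3}$ play no role.
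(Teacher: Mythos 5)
Your computation is correct and is exactly the ``direct calculation'' the paper alludes to (the paper gives no written proof): the key points --- $(A_1)_x=\a_x A_2+\xi_x A_1$, $(A_2)_y=\a_y A_1+\xi_y A_2$ from \eqref{eq:1st_govern}${}_1$, the identity $\cosh\a\,A_1-\sinh\a\,A_2=1$, hence $\kappa_1-\kappa_2=e^{\xi}/(A_1A_2)$ and the collapse of the quotient-rule numerators --- all check out, as does the reduction of the $\Omega$ condition to $\a_{xy}=\a_{yx}$. One bookkeeping caveat: with the paper's explicit choice $A_2=\sin\a-h\cos\a$ the correct second-kind identity is $\cos\a\,A_1+\sin\a\,A_2=1$ (not $\cos\a\,A_1-\sin\a\,A_2=1$ as you wrote), which yields $+\a_x$ and $+\a_y$; your stated identity and the corollary's signs correspond to the other square root $A_2=h\cos\a-\sin\a$, and since both quotients flip sign together the $\Omega$ conclusion with $\varepsilon=i$ is unaffected either way.
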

It is noted in the Appendix that a class of membrane O surfaces is a subclass of $\Omega$ surfaces in general.
\begin{proof}[Proof of Theorem~\ref{thm:main1_1st}]
In this case, the constraint \eqref{eq:constraint} takes the following form:
\begin{equation}
K_\circ^2 - H_\circ^2 = (H_\circ A_2 - K_\circ A_1)^2.
\end{equation}
We introduce three variables $h$, $\alpha$, and $\xi$ as follows:
\begin{equation}
\label{eq:1st_kind_parameters}
\begin{aligned}
H_\circ &= e^\xi \sinh \alpha, \quad
K_\circ = e^\xi \cosh \alpha,\\
A_1 &= \cosh \alpha + h \sinh \alpha, \quad
A_2 = \sinh \alpha + h \cosh \alpha.
\end{aligned}
\end{equation}
Then, since we have
\begin{equation}
\label{eq:1st_mc}
\begin{aligned}
\frac{(A_1)_y}{A_2} &= \alpha_y + \xi_y \tanh \alpha, \quad
\frac{(H_\circ)_y}{K_\circ} = \alpha_y + \frac{h_y}{1+h \coth \alpha},\\
\frac{(A_2)_x}{A_1} &= \alpha_x + \xi_x \coth \alpha, \quad
\frac{(K_\circ)_x}{H_\circ} = \alpha_x + \frac{h_x}{1+h \tanh \alpha},
\end{aligned}
\end{equation}
the Mainardi-Codazzi equation \eqref{eq:Mainardi_Codazzi_2} gives the condition for $h$:
\begin{equation}
h_x = (\coth \alpha + h) \xi_x, \quad
h_y = (\tanh \alpha + h) \xi_y.
\end{equation}
The compatibility condition $(h_x)_y = (h_y)_x$ yields equation \eqref{eq:1st_govern}${}_2$.
Furthermore, from equation \eqref{eq:1st_mc}, we have
\begin{equation}
p = \frac{(A_1)_y}{A_2} = \alpha_y + \xi_y \tanh \alpha, \quad
q = \frac{(A_2)_x}{A_1} = \alpha_x + \xi_x \coth \alpha.
\end{equation}
Therefore, the Gauss equation \eqref{eq:Gauss_eqn} gives the equation \eqref{eq:1st_govern}${}_3$.
Finally, in the derivation of the membrane stresses $T_1$ and $T_2$, using the first integral \eqref{eq:first_integral}, we have
\begin{equation}
\left\{
\begin{aligned}
2\overline{A}_1 H_\circ - q_n A_1^2 &= -q_n,\\
2\overline{A}_2 K_\circ - q_n A_2^2 &= q_n,
\end{aligned}
\right.
\iff
\left\{
\begin{aligned}
2T_2 A_1 H_\circ &= q_n (A_1^2 - 1),\\
2T_1 A_2 K_\circ &= q_n (A_2^2 + 1).
\end{aligned}
\right.
\end{equation}
Substitute expression \eqref{eq:1st_kind_parameters} to obtain the conclusion.
\end{proof}
The proof of Theorem \ref{thm:main1_2nd} is almost the same, so only a brief summary is provided here.
Under the normalization $f(x) = g(y) = q_n$, the constraint \eqref{eq:constraint} gives
\begin{equation}
K_\circ^2 + H_\circ^2 = (H_\circ A_2 - K_\circ A_1)^2.
\end{equation}
We introduce the three variables $h$, $\xi$, and $\alpha$ as follows:
\begin{equation}
\begin{aligned}
H_\circ &= e^\xi \sin \alpha, \quad
K_\circ = - e^\xi \cos \alpha,\\
A_1 &= \cos \alpha + h \sin \alpha, \quad
A_2 = \sin \alpha - h \cos \alpha.
\end{aligned}
\end{equation}
The first integral \eqref{eq:first_integral} is given by the following form:
\begin{equation}
\left\{
\begin{aligned}
2\overline{A}_1 H_\circ - q_n A_1^2 &= -q_n,\\
2\overline{A}_2 K_\circ - q_n A_2^2 &= -q_n,
\end{aligned}
\right.
\iff
\left\{
\begin{aligned}
2T_2 A_1 H_\circ &= q_n (A_1^2 - 1),\\
2T_1 A_2 K_\circ &= q_n (A_2^2 - 1).
\end{aligned}
\right.
\end{equation}

\section{B\"acklund transformations}
\label{sec:Backlund}

In \cite{Eisenhart_Omega2}, Eisenhart demonstrated that transformations of Guichard surfaces map Guichard surfaces of the 1st and 2nd kind to Guichard surfaces of the same kind (see also \cite{eisenhart_transformation}, \S 92).
This section presents an analogy for this result on membrane O surfaces.
In \cite{schief_unification_2003}, a Lax representation and B\"acklund transformation for general O surfaces were established.
The transformation is a very special case of the Ribaucour transformation, which is a restriction of Jonas-Eisenhart's fundamental transformation to the orthogonal coordinates.
As applications of these, the Lax representation and the B\"acklund transformation for membrane O surfaces are established in \cite{RogersSchief}.
In this section, we prove that restriction of the B\"acklund transformation to membrane O surfaces of the 1st and 2nd kind induces a transformation of solutions within each of these subclasses.
This fact suggests that the governing equations for membrane O surfaces of the 1st and 2nd kind are `solitonic'.

First, we review the general B\"acklund transformation for membrane O surfaces.
For details, see \cite{RogersSchief}, \S 6, and \cite{schief_unification_2003}, \S 5.
For a membrane O surface $\bR = (\bN, \br, \overline{\br})$, consider the system of linear equations
\begin{equation}
\label{eq:Lax_pair_M}
\begin{aligned}
\begin{pmatrix}
\bM\\
\uM^T
\end{pmatrix}_x &=
\begin{pmatrix}
O & m \bX \bH \Lambda\\
\bH^T \bX^T & O
\end{pmatrix}
\begin{pmatrix}
\bM\\
\uM^T
\end{pmatrix}, \\
\begin{pmatrix}
\bM\\
\uM^T
\end{pmatrix}_y &=
\begin{pmatrix}
O & m \bY \bK \Lambda\\
\bK^T \bY^T & O
\end{pmatrix}
\begin{pmatrix}
\bM\\
\uM^T
\end{pmatrix}.
\end{aligned}
\end{equation}
Here, $\bM, \uM^T \in \R^3$ and $m$ is any real number.
These compatibility conditions are the orthogonality conditions $\bX \cdot \bY = 0$, $\bH \Lambda \bK^T = 0$, and
\begin{equation}
\bX_y = q \bY, \quad
\bY_x = p \bX, \quad
\bH_y = p \bK, \quad
\bK_x = q \bH.
\end{equation}
Furthermore, the following constraint is admissible:
\begin{equation}
\bM \cdot \bM - m \uM \Lambda \uM^T = \textrm{const.}
\end{equation}
We take a solution $\bM$ and $\uM$ of the linear system \eqref{eq:Lax_pair_M} under the constraint
\begin{equation}
\label{eq:constraint_M}
\bM \cdot \bM = m \uM \Lambda \uM^T = 2M.
\end{equation}
Here, $M$ is defined by the above equation.
Then, the B\"acklund transformation of $\bR = (\bN, \br, \overline{\br})$ is given by
\begin{equation}
\bR' 
= \bR - \frac{\bM \uM}{M},
\end{equation}
and $\bR' = (\bN', \br', \overline{\br}')$ gives another memebrane O surface.
In this case, if we put
\begin{equation}
\label{eq:Backlund_aux_HK}
H = m \bH \Lambda \uM^T, \quad
K = m \bK \Lambda \uM^T,
\end{equation}
then the corresponding $\bH' = (H_\circ', A_1', \overline{A}_1')$ and $\bK' = (K_\circ, A_2', \overline{A}_2')$ are given by
\begin{equation}
\label{eq:Backlund_HK}
\bH' = \bH - \frac{H\uM}{M}, \quad
\bK' = \bK - \frac{K\uM}{M}.
\end{equation}
In the following, we express these in more concrete notations.
If we put $\bM = \lambda \bX + \mu \bY + \omega \bN$ and $\uM = (\omega', \varphi, \chi)$, then it follows from the equation \eqref{eq:Lax_pair_M} that we have the following linear system with a parameter $m$ (\cite{RogersSchief}, Equation (6.1)):
\begin{equation}
\label{eq:Lax_pair}
\begin{aligned}
\begin{pmatrix}
\lambda \\
\mu \\
\omega \\
\varphi \\
\chi
\end{pmatrix}_x &=
\begin{pmatrix}
0 & -p & m \overline{A}_1 - H_\circ & - m q_n A_1 & m H_\circ \\
p & 0 & 0 & 0 & 0 \\
H_\circ & 0 & 0 & 0 & 0 \\
A_1 & 0 & 0 & 0 & 0 \\
\overline{A}_1 & 0 & 0 & 0 & 0
\end{pmatrix}
\begin{pmatrix}
\lambda \\
\mu \\
\omega \\
\varphi \\
\chi
\end{pmatrix}, \\
\begin{pmatrix}
\lambda \\
\mu \\
\omega \\
\varphi \\
\chi
\end{pmatrix}_y &=
\begin{pmatrix}
0 & q & 0 & 0 & 0 \\
-q & 0 & m \overline{A}_2 - K_\circ & - m q_n A_2 & m K_\circ \\
0 & K_\circ & 0 & 0 & 0 \\
0 & A_2 & 0 & 0 & 0 \\
0 & \overline{A}_2 & 0 & 0 & 0
\end{pmatrix}
\begin{pmatrix}
\lambda \\
\mu \\
\omega \\
\varphi \\
\chi
\end{pmatrix}.
\end{aligned}
\end{equation}
Here we have used the fact that we can choose $\omega = \omega'$.
The compatibility conditions for this linear system are the Gauss-Mainardi-Codazzi equations \eqref{eq:Mainardi_Codazzi}, \eqref{eq:Gauss_eqn}, and the membrane equilibrium equations \eqref{eq:membrane_equilibrium}.
The constraint \eqref{eq:constraint_M} is given by the following equation:
\begin{equation}
\lambda^2 + \mu^2 + \omega^2 = 2m \omega \nu, \quad
\nu = \chi - \frac{q_n \varphi^2}{2\omega}.
\end{equation}
The B\"acklund transformation $\br'$ is constructed as follows:
\begin{equation}
\label{eq:Backlund}
\br' = \br - \frac{\varphi}{m\omega\nu} (\lambda \bX + \mu \bY + \omega \bN).
\end{equation}
Moreover, the equation \eqref{eq:Backlund_aux_HK} gives
\begin{equation}
\label{eq:eqn_H_K}
\begin{aligned}
H &= m \bH \Lambda \uM^T
= m \left( \nu H_\circ + \omega \overline{A}_1 - q_n \varphi A_1 + \frac{q_n \varphi^2}{2\omega} H_\circ \right),\\
K &= m \bK \Lambda \uM^T
= m \left( \nu K_\circ + \omega \overline{A}_2 - q_n \varphi A_2 + \frac{q_n \varphi^2}{2\omega} K_\circ \right).
\end{aligned}
\end{equation}
Therefore, from the equation \eqref{eq:Backlund_HK}, the coefficients $A_1'$, $A_2'$, $H_\circ'$, and $K_\circ'$ of the first and third fundamental form $\br'$ are given as follows:
\begin{equation}
\begin{aligned}
A_1' &= A_1 - \frac{H}{m \omega \nu} \varphi, \quad
A_2' = A_2 - \frac{K}{m \omega \nu} \varphi,\\
H_\circ' &= H_\circ - \frac{H}{m \nu}, \quad
K_\circ' = K_\circ - \frac{K}{m \nu}.
\end{aligned}
\end{equation}
Under the above notations, the following results are shown:
\begin{thm}
\label{thm:main2_1st}
For a membrane O surface of the 1st kind, by using solutions of the linear system \eqref{eq:Lax_pair}, we introduce variables $\xi'$, $\alpha'$ and $h'$ as follows:
\begin{equation}
e^{\xi'} = \frac{q_n}{2} \frac{\omega}{\nu} e^{-\xi} (1-t^2), \quad
e^{\alpha'} = e^{-\alpha} \frac{1-t}{1+t}, \quad
h' = t + \frac{\varphi}{\omega} e^{\xi'}.
\end{equation}
Here we put $t = h - \dfrac{\varphi}{\omega} e^\xi$.
Then, the coefficients of the first and third fundamental forms of $\br'$ are given by
\begin{equation}
\begin{aligned}
A_1' &= \cosh \alpha' + h' \sinh \alpha', \quad
A_2' = - (\sinh \alpha' + h' \cosh \alpha'),\\
H_\circ' &= e^{\xi'} \sinh \alpha', \quad
K_\circ' = - e^{\xi'} \cosh \alpha'.
\end{aligned}
\end{equation}
Therefore, $(\xi', \alpha', h')$ again satisfies the equation \eqref{eq:1st_govern}, and the B\"acklund transformation for membrane O surfaces preserves the membrane O surface of the 1st kind.
\end{thm}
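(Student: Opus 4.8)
The plan is to compute the first-- and third--fundamental--form coefficients $A_1',A_2',H_\circ',K_\circ'$ of $\br'$ from the formulas already assembled in this section (namely \eqref{eq:eqn_H_K}, \eqref{eq:Backlund_HK}, \eqref{eq:Backlund}), to substitute the $1$st--kind parametrisation \eqref{eq:1st_kind_parameters} together with the first integrals \eqref{eq:first_integral} in the normalised form $2\overline{A}_1H_\circ-q_nA_1^2=-q_n$, $2\overline{A}_2K_\circ-q_nA_2^2=q_n$ (which express $\overline{A}_1,\overline{A}_2$ rationally in terms of $\xi,\alpha,h$), and then to verify by elementary algebra that the outcome is precisely the expression claimed in the statement. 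Once this is done, the remaining assertions follow from the general O-surface theory and Theorem~\ref{thm:main1_1st}: $\first'$ and $\third'$ then have exactly the shape of a membrane O surface of the $1$st kind, so $\br'$ is of the $1$st kind, and Theorem~\ref{thm:main1_1st} yields that $(\xi',\alpha',h')$ solves \eqref{eq:1st_govern}.

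For the third-form coefficients I would begin from $H_\circ'=H_\circ-H/(m\nu)$ and $K_\circ'=K_\circ-K/(m\nu)$. Multiplying the formula \eqref{eq:eqn_H_K} for $H$ (resp.\ $K$) by $2\omega H_\circ$ (resp.\ $2\omega K_\circ$) and eliminating $\overline{A}_1$ (resp.\ $\overline{A}_2$) via the first integral, the $q_n$-terms assemble into $q_n(\omega A_1-\varphi H_\circ)^2\mp q_n\omega^2$, the residual $\mp q_n\omega^2$ arising from the $-1$ in $A_1^2-1$ against the $+1$ in $A_2^2+1$; this opposite sign --- itself inherited from $f=q_n$, $g=-q_n$ --- is the single source of the $x/y$ asymmetry in the final formulas. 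Under \eqref{eq:1st_kind_parameters} one checks the clean identities $\omega A_1-\varphi H_\circ=\omega(\cosh\alpha+t\sinh\alpha)$ and $\omega A_2-\varphi K_\circ=\omega(\sinh\alpha+t\cosh\alpha)$, in which $t=h-(\varphi/\omega)e^\xi$ is exactly the auxiliary quantity of the statement; the $\nu H_\circ^2$ (resp.\ $\nu K_\circ^2$) piece cancels against $H_\circ$ (resp.\ $K_\circ$) in $H_\circ'$ (resp.\ $K_\circ'$); and, recognising $\tfrac{q_n\omega}{2\nu}e^{-\xi}=e^{\xi'}/(1-t^2)$ and applying the elementary identities $(1-t^2)\sinh\alpha'=-(\sinh\alpha(1+t^2)+2t\cosh\alpha)$ and $(1-t^2)\cosh\alpha'=\cosh\alpha(1+t^2)+2t\sinh\alpha$ (immediate from $e^{\alpha'}=e^{-\alpha}\tfrac{1-t}{1+t}$), one arrives at $H_\circ'=e^{\xi'}\sinh\alpha'$ and $K_\circ'=-e^{\xi'}\cosh\alpha'$. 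For the first-form coefficients it is cleanest to rewrite $A_1'=A_1-\tfrac{\varphi}{\omega}(H_\circ-H_\circ')$ and $A_2'=A_2-\tfrac{\varphi}{\omega}(K_\circ-K_\circ')$ (from the component expressions for $A_1',H_\circ'$ recorded just after \eqref{eq:Backlund_HK}), use $A_1-\tfrac{\varphi}{\omega}H_\circ=\cosh\alpha+t\sinh\alpha$ and $A_2-\tfrac{\varphi}{\omega}K_\circ=\sinh\alpha+t\cosh\alpha$, and apply the `addition formulas' $\cosh\alpha+t\sinh\alpha=\cosh\alpha'+t\sinh\alpha'$, $\sinh\alpha+t\cosh\alpha=-(\sinh\alpha'+t\cosh\alpha')$ (again immediate from the definition of $\alpha'$); together with $h'=t+\tfrac{\varphi}{\omega}e^{\xi'}$ and the formulas just found for $H_\circ',K_\circ'$, this gives $A_1'=\cosh\alpha'+h'\sinh\alpha'$ and $A_2'=-(\sinh\alpha'+h'\cosh\alpha')$.

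To finish, I would record that \eqref{eq:Backlund_HK} together with $\bH\Lambda\uM^{T}=H/m$ and $\uM\Lambda\uM^{T}=2M/m$ gives $\bH'\Lambda(\bH')^{T}=\bH\Lambda\bH^{T}$, $\bK'\Lambda(\bK')^{T}=\bK\Lambda\bK^{T}$, so that the normalisation $f=-g=q_n$ is inherited by $\br'$ --- the same conclusion being visible directly from the computed $\first'$, $\third'$. Since $\br'$ is a genuine surface, its coefficients $A_1',A_2',H_\circ',K_\circ'$ satisfy the Mainardi-Codazzi \eqref{eq:Mainardi_Codazzi_2} and Gauss \eqref{eq:Gauss_eqn} equations, and these are left unchanged by the simultaneous sign flip $(A_2',K_\circ')\mapsto(-A_2',-K_\circ')$; hence $(A_1',-A_2',H_\circ',-K_\circ')$ is exactly of the form \eqref{eq:1st_kind_parameters} with the triple $(\xi',\alpha',h')$, and the computation in the proof of Theorem~\ref{thm:main1_1st} applies verbatim to show that $(\xi',\alpha',h')$ satisfies \eqref{eq:1st_govern}. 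The only real obstacle in all of this is the bookkeeping in the two perfect-square simplifications and the careful tracking of the $x/y$ sign asymmetry inherited from $f=q_n$, $g=-q_n$; every other identity used is routine hyperbolic-function algebra. (Throughout one tacitly assumes the genericity conditions $\omega\neq0$, $\nu\neq0$, $1\pm t\neq0$ and $\tfrac{q_n\omega}{2\nu}e^{-\xi}(1-t^2)>0$ that make $\xi',\alpha',h'$ well defined.)
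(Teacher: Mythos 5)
Your proposal is correct and follows essentially the same route as the paper's proof: substitute the first-kind parametrisation and the normalised first integrals into \eqref{eq:eqn_H_K}, recognise the combination $t=h-(\varphi/\omega)e^{\xi}$, and pass via the definitions of $\xi'$, $\alpha'$, $h'$ and the hyperbolic ``addition'' identities to the claimed forms of $A_1'$, $A_2'$, $H_\circ'$, $K_\circ'$. Your closing paragraph (invariance of $\bH\Lambda\bH^{T}$ and $\bK\Lambda\bK^{T}$ under \eqref{eq:Backlund_HK}, and the sign-flip invariance of the Gauss--Mainardi--Codazzi system under $(A_2',K_\circ')\mapsto(-A_2',-K_\circ')$) usefully makes explicit the final step --- that $(\xi',\alpha',h')$ indeed satisfies \eqref{eq:1st_govern} with the normalisation $f=-g=q_n$ preserved --- which the paper leaves implicit.
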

\begin{thm}
\label{thm:main2_2nd}
For a membrane O surface of the 2nd kind, by using solutions of the linear system \eqref{eq:Lax_pair}, we introduce variables $\xi'$, $\alpha'$ and $h'$ as follows:
\begin{equation}
e^{\xi'} = \frac{q_n}{2} \frac{\omega}{\nu} e^{-\xi} (1+t^2), \quad
e^{i\alpha'} = e^{i\alpha} \frac{1-it}{1+it}, \quad
h' = -t + \frac{\varphi}{\omega}e^{\xi'}.
\end{equation}
Here we put $t = h - \dfrac{\varphi}{\omega} e^\xi$.
Then, the coefficients of the first and third fundamental forms of $\br'$ are given by
\begin{equation}
\begin{aligned}
A_1' &= \cos \alpha' + h' \sin \alpha', \quad
A_2' = \sin \alpha' - h' \cosh \alpha',\\
H_\circ' &= e^{\xi'} \sin \alpha', \quad
K_\circ' = - e^{\xi'} \cos \alpha'.
\end{aligned}
\end{equation}
Therefore, $(\xi', \alpha', h')$ again satisfies equation \eqref{eq:2nd_govern}, and the B\"acklund transformation for membrane O surfaces preserves the membrane O surface of the 2nd kind.
\end{thm}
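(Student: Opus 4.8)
The plan is to run, essentially verbatim, the computation behind Theorem~\ref{thm:main2_1st}: feed the explicit B\"acklund formulas for $A_1',A_2',H_\circ',K_\circ'$ into the $2$nd-kind parametrisation, read off $\xi',\alpha',h'$, and then invoke Theorem~\ref{thm:main1_2nd}. The only systematic change from the $1$st kind is that hyperbolic functions are replaced by trigonometric ones, together with the sign $K_\circ=-e^{\xi}\cos\alpha$. The first observation is that the B\"acklund transformation of \cite{schief_unification_2003} leaves both first integrals invariant: from $\bH'=\bH-(H/M)\uM$ with $H=m\,\bH\Lambda\uM^{T}$ and the constraint $\bM\cdot\bM=m\,\uM\Lambda\uM^{T}=2M$ of \eqref{eq:constraint_M} one obtains $\bH'\Lambda(\bH')^{T}=\bH\Lambda\bH^{T}$ in a single line, and likewise $\bK'\Lambda(\bK')^{T}=\bK\Lambda\bK^{T}$. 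Since $\bR'=(\bN',\br',\overline{\br}')$ is again a membrane O surface (hence in curvature-line coordinates) by \cite{RogersSchief}, \cite{schief_unification_2003}, it has first integrals $f'=f=q_n$ and $g'=g=q_n$ in the same coordinates $(x,y)$, so it is of the $2$nd kind; by the equivalence between the $2$nd-kind normal form of $\first,\third$ and the system \eqref{eq:2nd_govern} established in the proof of Theorem~\ref{thm:main1_2nd}, it then suffices to show the fundamental forms of $\br'$ are in that normal form with the stated $\xi',\alpha',h'$.

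For the verification I would combine $H_\circ'=H_\circ-H/(m\nu)$, $K_\circ'=K_\circ-K/(m\nu)$, $A_1'=A_1-\varphi H/(m\omega\nu)$, $A_2'=A_2-\varphi K/(m\omega\nu)$ with the formulas \eqref{eq:eqn_H_K} for $H$ and $K$, eliminating $\overline{A}_1,\overline{A}_2$ through the $2$nd-kind first integral \eqref{eq:first_integral}, which here reads $2\overline{A}_1H_\circ=q_n(A_1^{2}-1)$ and $2\overline{A}_2K_\circ=q_n(A_2^{2}-1)$ (equivalently $\overline{A}_1=T_2A_1$, $\overline{A}_2=T_1A_2$ via \eqref{eq:2nd_stress}). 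This collapses the coefficients to $H_\circ'=-\tfrac{q_n}{2\nu\omega H_\circ}\bigl((\omega A_1-\varphi H_\circ)^{2}-\omega^{2}\bigr)$ and $K_\circ'=-\tfrac{q_n}{2\nu\omega K_\circ}\bigl((\omega A_2-\varphi K_\circ)^{2}-\omega^{2}\bigr)$. Substituting the $2$nd-kind parametrisation $H_\circ=e^{\xi}\sin\alpha$, $K_\circ=-e^{\xi}\cos\alpha$, $A_1=\cos\alpha+h\sin\alpha$, $A_2=\sin\alpha-h\cos\alpha$ and setting $t=h-(\varphi/\omega)e^{\xi}$ turns $\omega A_1-\varphi H_\circ$ into $\omega(\cos\alpha+t\sin\alpha)$ and $\omega A_2-\varphi K_\circ$ into $\omega(\sin\alpha-t\cos\alpha)$, so that the two brackets factor as $\omega^{2}\sin\alpha\bigl((t^{2}-1)\sin\alpha+2t\cos\alpha\bigr)$ and $\omega^{2}\cos\alpha\bigl((t^{2}-1)\cos\alpha-2t\sin\alpha\bigr)$ respectively.

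It then remains to match these against $H_\circ'=e^{\xi'}\sin\alpha'$ and $K_\circ'=-e^{\xi'}\cos\alpha'$. The defining relation $e^{i\alpha'}=e^{i\alpha}(1-it)/(1+it)$ amounts to $\cos\alpha'=\tfrac{(1-t^{2})\cos\alpha+2t\sin\alpha}{1+t^{2}}$ and $\sin\alpha'=\tfrac{(1-t^{2})\sin\alpha-2t\cos\alpha}{1+t^{2}}$, and together with $e^{\xi'}=\tfrac{q_n}{2}\tfrac{\omega}{\nu}e^{-\xi}(1+t^{2})$ these reproduce exactly the factored expressions above. For $A_1'$ and $A_2'$ one uses $H/(m\nu)=H_\circ-H_\circ'$ and $K/(m\nu)=K_\circ-K_\circ'$, so that $A_1'=A_1-(\varphi/\omega)(H_\circ-H_\circ')$ and $A_2'=A_2-(\varphi/\omega)(K_\circ-K_\circ')$, together with the elementary identities $\cos\alpha'-t\sin\alpha'=\cos\alpha+t\sin\alpha$ and $\sin\alpha'+t\cos\alpha'=\sin\alpha-t\cos\alpha$ (immediate from the formulas for $\cos\alpha',\sin\alpha'$); this yields $A_1'=\cos\alpha'+h'\sin\alpha'$ and $A_2'=\sin\alpha'-h'\cos\alpha'$ with $h'=-t+(\varphi/\omega)e^{\xi'}$, as claimed. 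Theorem~\ref{thm:main1_2nd} then shows $(\xi',\alpha',h')$ solves \eqref{eq:2nd_govern}, and hence the B\"acklund transformation preserves membrane O surfaces of the $2$nd kind.

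The one genuinely delicate point will be bookkeeping: the elimination of $\overline{A}_1,\overline{A}_2$ (and, if one works directly with \eqref{eq:Lax_pair}, of $\lambda,\mu,\chi$) in favour of $\alpha,\xi,h,\varphi/\omega,\nu$ must be kept tightly under control, and the sign conventions of the $2$nd kind ($K_\circ<0$, $\varepsilon=i$) must be carried consistently --- these are precisely what converts the hyperbolic identities used for Theorem~\ref{thm:main2_1st} into the trigonometric ones needed here. No idea beyond the $1$st-kind argument is required; once the substitution is organised as above, the remaining computation is mechanical and parallels that proof line by line.
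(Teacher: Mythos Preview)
Your proposal is correct and follows essentially the same route as the paper: the paper simply declares that the proof of Theorem~\ref{thm:main2_2nd} is ``exactly the same'' as that of Theorem~\ref{thm:main2_1st}, i.e.\ a direct substitution of the 2nd-kind parametrisation into the B\"acklund formulas \eqref{eq:eqn_H_K}--\eqref{eq:Backlund_HK}, and your computation carries this out explicitly (with a slightly different but equivalent intermediate organisation via the factorisation $(\omega A_1-\varphi H_\circ)^2-\omega^2$). Your opening observation that $\bH'\Lambda(\bH')^T=\bH\Lambda\bH^T$ and $\bK'\Lambda(\bK')^T=\bK\Lambda\bK^T$, hence $f'=f$ and $g'=g$, is a nice conceptual addition not made explicit in the paper, though it is not strictly needed once the normal form is verified directly.
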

\begin{proof}[Proof of Theorem~\ref{thm:main2_1st}]
The coefficients for the fundamental form in the 1st kind are restated as follows:
\begin{equation}
\begin{aligned}
A_1 &= \cosh \alpha + h \sinh \alpha, \quad
A_2 = \sinh \alpha + h \cosh \alpha,\\
H_\circ &= e^\xi \sinh \alpha, \quad
K_\circ = e^\xi \cosh \alpha,\\
\overline{A}_1 &= \frac{q_n}{2} e^{-\xi} ((1+h^2)\sinh \alpha + 2h \cosh \alpha),\\
\overline{A}_2 &= \frac{q_n}{2} e^{-\xi} ((1+h^2)\cosh \alpha + 2h \sinh \alpha).
\end{aligned}
\end{equation}
Under these conditions, setting $t = h - \dfrac{\varphi}{\omega} e^\xi$ yields the following equation:
\begin{equation}
\begin{aligned}
\omega \overline{A}_1 - q_n \varphi A_1 + \frac{q_n \varphi^2}{2\omega} H_\circ
&= \frac{q_n}{2} \omega e^{-\xi} ((1+t^2) \sinh \alpha + 2t \cosh \alpha),\\
\omega \overline{A}_2 - q_n \varphi A_2 + \frac{q_n \varphi^2}{2\omega} K_\circ
&= \frac{q_n}{2} \omega e^{-\xi} ((1+t^2) \cosh \alpha + 2t \sinh \alpha).
\end{aligned}
\end{equation}
Therefore, if we put
\begin{equation}
e^{\xi'} = \frac{q_n}{2} \frac{\omega}{\nu} e^{-\xi} (1-t^2), \quad
e^{\alpha'} = e^{-\alpha} \frac{1-t}{1+t},
\end{equation}
then we have
\begin{equation}
\begin{aligned}
\omega \overline{A}_1 - q_n \varphi A_1 + \frac{q_n \varphi^2}{2\omega} H_\circ
&= -\nu e^{\xi'} \sinh \alpha',\\
\omega \overline{A}_2 - q_n \varphi A_2 + \frac{q_n \varphi^2}{2\omega} K_\circ
&= \nu e^{\xi'} \cosh \alpha'.
\end{aligned}
\end{equation}
Using these, equation \eqref{eq:eqn_H_K} can be rewritten as follows:
\begin{equation}
H = m\nu (H_\circ - e^{\xi'} \sinh \alpha'), \quad
K = m\nu (K_\circ + e^{\xi'} \sinh \alpha'),
\end{equation}
Therefore, the coefficients $H_\circ'$ and $K_\circ'$ of the third fundamental form of $\br'$ are given by
\begin{equation}
H_\circ' 
= H_\circ - \frac{H}{m\nu} = e^{\xi'} \sinh \alpha',\quad
K_\circ' 
= K_\circ - \frac{K}{m\nu} = - e^{\xi'} \cosh \alpha'.
\end{equation}
Furthermore, $h'$ is defined by the following relation:
\begin{equation}
h' 
= t + \frac{\varphi}{\omega}e^{\xi'}
= h + \frac{\varphi}{\omega}(e^{\xi'} - e^\xi).
\end{equation}
Then, the coefficients $A_1'$ and $A_2'$ of the first fundamental form of $\br'$ are given by the following expressions:
\begin{equation}
\begin{aligned}
A_1' &= A_1 - \frac{H}{m \omega \nu} \varphi
= \cosh \alpha + t \sinh \alpha + \frac{\varphi}{\omega} e^{\xi'} \sinh \alpha'
= \cosh \alpha' + h' \sinh \alpha',\\
A_2' &= A_2 - \frac{K}{m \omega \nu} \varphi
= \sinh \alpha + t \sinh \alpha - \frac{\varphi}{\omega} e^{\xi'} \cosh \alpha'
= - (\sinh \alpha' + h' \cosh \alpha').
\end{aligned}
\end{equation}
Here, we used the relations
\begin{equation}
\begin{aligned}
\cosh \alpha + t \sinh \alpha &= \cosh \alpha_1 + t \sinh \alpha_1, \\
\sinh \alpha + t \cosh \alpha &= - (\sinh \alpha_1 + t \cosh \alpha_1).
\end{aligned}
\end{equation}
This proves the theorem.
The proof for Theorem~\ref{thm:main2_2nd} is exactly the same.
\end{proof}
\begin{ex}[Bianchi-Darboux transformation~\cite{bianchi_ricerche_1905}, \cite{cieslinski_darboux-bianchi_1997}]
Consider again constant mean curvature surfaces shown in Example \ref{ex:cmc}.
In this case, we can put $\chi = q_n \varphi$ in the equation \eqref{eq:Lax_pair}, and by putting $\sigma = \varphi - 2\omega$ and $\overline{m} = mq_n/2$, we have
\begin{equation}
\begin{aligned}
\begin{pmatrix}
\lambda \\
\mu \\
\omega \\
\varphi \\
\sigma
\end{pmatrix}_x &=
\begin{pmatrix}
0 & -\alpha_y & -\sinh \alpha & -\overline{m}e^{-\alpha} & \overline{m}e^\alpha \\
\alpha_y & 0 & 0 & 0 & 0 \\
\sinh \alpha & 0 & 0 & 0 & 0 \\
e^{\alpha} & 0 & 0 & 0 & 0 \\
e^{-\alpha} & 0 & 0 & 0 & 0
\end{pmatrix}
\begin{pmatrix}
\lambda \\
\mu \\
\omega \\
\varphi \\
\sigma
\end{pmatrix}, \\
\begin{pmatrix}
\lambda \\
\mu \\
\omega \\
\varphi \\
\sigma
\end{pmatrix}_y &=
\begin{pmatrix}
0 & \alpha_x & 0 & 0 & 0 \\
-\alpha_x & 0 & \cosh \alpha & \overline{m} e^{-\alpha} & \overline{m} e^\alpha \\
0 & \cosh \alpha & 0 & 0 & 0 \\
0 & e^\alpha & 0 & 0 & 0 \\
0 & -e^{-\alpha} & 0 & 0 & 0
\end{pmatrix}
\begin{pmatrix}
\lambda \\
\mu \\
\omega \\
\varphi \\
\sigma
\end{pmatrix}.
\end{aligned}
\end{equation}
A direct calculation shows $e^{\xi'} = h' = 1$, $e^{\alpha'} = -\frac{\varphi}{\sigma} e^{-\alpha}$.
This transformation is called the (classical) \textit{Bianchi-Darboux transformation} for isothermic surfaces.
\end{ex}

\section{Appendix: Relation with Demoulin's $\Omega$ surfaces}

Note the general relationship between $\Omega$ surfaces and membrane O surfaces.
In general, for a Combescure-related triplet $\br_1$, $\br_2$, $\br_3$, we denote the principal curvatures $\kappa_1$, $\kappa_2$, $\overline{\kappa}_1$, $\overline{\kappa}_2$, $l_1$, $l_2$, respectively.
Then the condition for the $\Omega$ surface is given by the constraint
\begin{equation}
\label{eq:appendix_Omega}
\frac{1}{\kappa_1 \overline{\kappa}_2} + \frac{1}{\overline{\kappa}_1 \kappa_2} 
= \frac{1}{l_1} + \frac{1}{l_2},
\end{equation}
see \cite{Mason} for details.
In particular, if the sum of the principal curvature radii of $\br_3$ is constant, it yields a Guichard surface, and in this sense, it provides a generalization of Guichard surfaces.
We consider these within the framework of O surfaces.
Let the coefficients of the first fundamental form of the surface $\br_i$ ($i=1,2,3$) be denoted by $H_i$ and $K_i$, and let $\br_4 = \bN$ be the Gauss map.
Then we have
\begin{equation}
\label{eq:appendix_orthogonality}
\begin{aligned}
\frac{1}{\kappa_1 \overline{\kappa}_2} + \frac{1}{\overline{\kappa}_1 \kappa_2} = \frac{1}{l_1} + \frac{1}{l_2}
\iff H_1 K_2 + H_2 K_1 + H_3 K_\circ + K_3 H_\circ = 0.
\end{aligned}
\end{equation}
Here, we used the relation
\begin{equation}
\begin{aligned}
H_\circ = - \kappa_1 H_1 = - \overline{\kappa}_1 H_2 = - l_1 H_3,\\
K_\circ = - \kappa_1 K_1 = - \overline{\kappa}_2 K_2 = - l_2 K_3.
\end{aligned}
\end{equation}
Therefore, if we put
\begin{equation}
\bH = (H_1, H_2, H_3, H_\circ), \quad
\bK = (K_1, K_2, K_3, K_\circ), \quad
\Lambda = 
\begin{pmatrix}
0 & 1 & 0 & 0 \\
1 & 0 & 0 & 0 \\
0 & 0 & 0 & 1 \\
0 & 0 & 1 & 0
\end{pmatrix},
\end{equation}
then the condition for $\Omega$ surface \eqref{eq:appendix_Omega} is represented as the orthogonality condition $\bH \Lambda \bK^T = 0$.

Now, we consider a specialization $(H_2, K_2) = - \dfrac{q_n}{2} (H_1, K_1)$, that is, the case where $\br_1$ and $\br_2$ are similar.
Then if we put $(H_1, K_1) = (A_1, A_2)$, $(H_3, K_3) = (\overline{A}_1, \overline{A}_2)$, then the condition \eqref{eq:appendix_orthogonality} is reduced to
\begin{equation}
-q_n A_1A_2 + \overline{A}_1 K_\circ + H_\circ \overline{A}_2= 0,
\end{equation}
which is equivalent to the out-of-plane equilibrium equation of a membrane \eqref{eq:membrane_equilibrium}.
Such $\Omega$ surfaces with self-duality are implicitly suggested in \cite{DLWS}, \cite{dKoenigs}.

\section*{Acknowledgments}
This study is supported by JSPS KAKENHI Grant No. JP24K16924 and No. JP25K21661.

\bibliographystyle{amsplain}
\bibliography{main}

\end{document}